\documentclass[a4paper,12pt]{amsart}
\usepackage[utf8]{inputenc}

\usepackage[T1]{fontenc}
\usepackage{lmodern}
\usepackage{microtype}
\usepackage{amsmath,amssymb,amsthm,mathtools}
\usepackage[margin=30mm]{geometry}
\usepackage[colorlinks=true,citecolor=blue,linkcolor=blue,urlcolor=blue]{hyperref}

\usepackage[backend=biber,style=alphabetic,maxbibnames=99,maxcitenames=3]{biblatex}
\AtBeginBibliography{\small\sloppy\setlength{\bibitemsep}{0pt}}
\AtEveryBibitem{\clearfield{doi}\clearfield{eprint}\clearfield{archivePrefix}\clearfield{primaryClass}\clearfield{url}\clearfield{urldate}}

\newtheorem{theorem}{Theorem}[section]
\newtheorem{proposition}[theorem]{Proposition}
\newtheorem{lemma}[theorem]{Lemma}

\theoremstyle{definition}
\newtheorem{definition}[theorem]{Definition}
\theoremstyle{remark}

\newcommand{\OO}{\mathcal O}
\newcommand{\CC}{\mathcal C}
\newcommand{\relint}{\operatorname{relint}}
\newcommand{\dir}{\operatorname{dir}}

\title{The Hibi--Li Face-Number Conjecture}

\author{Ghislain Fourier}
\address{Chair of Algebra and Representation Theory, RWTH Aachen University, Pontdriesch 10--16, 52062 Aachen, Germany}
\email{fourier@art.rwth-aachen.de}

\subjclass[2020]{Primary 52B05; Secondary 06A07, 52B20}

\keywords{order polytope, chain polytope, chain--order polytope,
face numbers, polytopal subdivision, poset}

\begin{document}

\begin{abstract}
We prove the face-number conjecture of Hibi and Li: for every finite poset, the order polytope has no more faces of any given dimension than the chain polytope. 
More generally, the face numbers increase weakly along the admissible family of chain--order polytopes when one order coordinate is replaced by a chain coordinate.

\end{abstract}
\maketitle
\section{Introduction}

The order polytope $\mathcal O(P)$ and the chain polytope $\mathcal C(P)$ of a finite poset $P$ were studied by Stanley \cite{Sta86}, following Geissinger's work on the faces of the order polytope \cite{Gei81}. 
Stanley's transfer map shows that the two polytopes have the same Ehrhart polynomial; their numbers of vertices also agree. 
Hibi and Li \cite{HL16} proved that the order polytope has at most as many facets as the chain polytope, with equality precisely when the two polytopes are unimodularly equivalent. 
They conjectured the corresponding inequality for faces of every dimension. 

Hibi, Li, Sahara and Shikama \cite{Hib+17} proved that the numbers of edges always agree. 
Descriptions of edges and low-dimensional faces were obtained in \cite{HL19,Mor19}, and simplex faces were studied further in \cite{Mor25a,Mor25b,FL26}. 
Recently Ahmad, the author and Joswig
\cite{AFJ26} proved the conjecture for maximal ranked posets while Freij-Hollanti and Lundstr\"om \cite{FL24} proved it for the larger class generated from $X$-free posets by disjoint unions and ordinal sums. 
More recently, Freij-Hollanti, Lundstr\"om and Mori \cite{FLM26} proved the two-dimensional inequality for arbitrary finite posets, including its equality characterization. 
The difference between the numbers of facets was studied in
\cite{Bha+25,Fou26}.

Marked order and chain polytopes were introduced by Ardila, Bliem and Salazar \cite{ABS11}; they provided a generalization of Stanley's transfer map.
The author \cite{Fou16} extended the facet comparison to marked posets and formulated the marked face-number conjecture. 
Fang and the author \cite{FF16} introduced marked chain--order polytopes and conjectured that face numbers increase when order coordinates are replaced by chain coordinates along admissible decompositions. 
Later, the two together with Litza and Pegel \cite{Fan+20} placed these polytopes in a continuous family. 
The face structure of marked order polyhedra and Minkowski decompositions of marked chain--order polytopes were studied in \cite{Peg18,FFP20}, respectively.

In this paper we prove the face-number inequalities conjectured by Hibi and Li. 
For a polytope $Q$, let $f_k(Q)$ denote the number of its $k$-dimensional faces.
\begin{theorem}\label{thm:face-inequality}
Let $P$ be a finite poset with $n$ elements. Then
\[
 f_k(\mathcal O(P))\leq f_k(\mathcal C(P))  \qquad (0\leq k\leq n-1).
\]
\end{theorem}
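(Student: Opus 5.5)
We plan to prove the stronger statement from the abstract and then specialise. For a subset $C\subseteq P$ we consider the chain--order polytope $\mathcal O_{C}(P)$: it uses chain coordinates on $C$ and order coordinates on $P\setminus C$. Then $\mathcal O_\emptyset(P)=\OO(P)$ and $\mathcal O_P(P)=\CC(P)$. We will order $P=\{p_1,\dots,p_n\}$ by a linear extension and pass through the sets $C_j=\{p_1,\dots,p_j\}$. In each step one order coordinate is turned into a chain coordinate, and the resulting family is admissible in the sense of \cite{FF16}. It therefore suffices to show, for a single admissible step $C\to C'=C\cup\{p\}$, that
\[
f_k(\mathcal O_C(P))\le f_k(\mathcal O_{C'}(P))\qquad\text{for all }k .
\]
Chaining these inequalities from $j=0$ to $j=n$ gives Theorem~\ref{thm:face-inequality}.

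For the single step we would use the piecewise-linear transfer map $\phi$ from $\mathcal O_C(P)$ to $\mathcal O_{C'}(P)$. It changes only the coordinate $p$, sending $x_p$ to $x_p-\max_{q\lessdot p}x_q$, with the appropriate convention for chain coordinates of lower covers. The map $\phi$ is linear on each cell of the subdivision $\Sigma$ of $\mathcal O_C(P)$ cut out by the hyperplanes $x_q=x_{q'}$ for $q,q'$ lower covers of $p$. It transports $\Sigma$ to a polytopal subdivision $\Sigma'$ of $\mathcal O_{C'}(P)$ whose cells are lattice-equivalent to those of $\Sigma$. So $\Sigma$ and $\Sigma'$ have the same face lattice as complexes. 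We then count faces of each polytope through its subdivision: every $k$-face $F$ of $Q$ is the union of the cells of $\Sigma$ lying in $F$, and we would assign $F$ to a canonical minimal $k$-cell of $\Sigma$ in its relative interior. The key step is an injection
\[
\{k\text{-faces of }\OO_C(P)\}\hookrightarrow\{k\text{-faces of }\OO_{C'}(P)\}.
\]
To build it we send $F$ to the $\phi$-image of its chosen cell and take the smallest face of $\mathcal O_{C'}(P)$ containing that image in its relative interior. We must check that this face has dimension exactly $k$ and that distinct $F$ give distinct faces.

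This verification is the main obstacle. The boundary of $\Sigma'$ can be finer or coarser than the boundary of $\Sigma$: on the order side, the cells touching the hyperplane $x_p=\max x_q$ sit in the face $x_p=x_q$, whereas their images land in the facet $x_p=0$, which is glued differently. We would use Geissinger's description of faces of order polytopes via face partitions (connected, compatible partitions of $\hat P$) to control the source side. For the target side we would use the analogous description for chain--order polytopes, where faces come from collections of maximal chains. We would first test the injection directly in the combinatorial language. Write the face partition of $F$, look at the block containing $p$, and replace its dependence on $p$ by the chain condition through $p$. Then show that the star-shaped local picture around $p$ on the chain side has at least as many faces as the fan-like picture on the order side. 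The reason is that lower covers merged into one block on the order side split into separate chain conditions on the chain side. If the direct injection fails, the fallback is a local count around the cells meeting the interior hyperplanes, comparing $f$-vectors of the two local complexes.
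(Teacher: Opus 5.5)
\textbf{There is a genuine gap in the single step.} Your reduction to one-element steps along a linear extension is the same as the paper's. But the single step carries the whole content of the theorem, and you leave it unproved; the injection you describe also fails.

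A smaller point first. Since $C$ is an order ideal and $p$ is minimal in $P\setminus C$, every element below $p$ carries a chain coordinate. So $\phi$ subtracts $L(x)=\max\bigl(\{\sum_{i\in K}x_i : K\subseteq C\text{ a chain below }p\}\cup\{0\}\bigr)$, and its domains of linearity are cut out by equalities between chain sums, not by hyperplanes $x_q=x_{q'}$.

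More seriously, projecting away $x_p$, both polytopes fibre over the same base $B$:
$\OO_C(P)=\{(u,z): L(u)\le z\le U(u)\}$ and $\OO_{C'}(P)=\{(u,w): 0\le w\le U(u)-L(u)\}$, where $U(u)=\min(\{u_r:r>p\}\cup\{1\})$. Let $\mathcal S$ and $\mathcal T$ be the subdivisions of $B$ into affinity domains of $L$ and $U$.
\begin{itemize}
\item The lower boundary of $\OO_C(P)$ has one face for each cell of $\mathcal S$, but $\phi$ flattens all of it onto the single facet $B\times\{0\}$.
\item A lower face over a cell $A$ is sent onto $A\times\{0\}$. The smallest face of $\OO_{C'}(P)$ containing this in its relative interior is $E\times\{0\}$, where $E$ is the smallest face of $B$ containing $A$.
\item Hence distinct faces are merged, and the dimension jumps from $\dim A$ to $\dim E$.
\end{itemize}

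This already happens for $P=\{a,b<p\}$ and $C=\{a,b\}$, where $\OO_C(P)=\OO(P)$. The facets $\{x_p=x_a\}$ and $\{x_p=x_b\}$ are both sent into the facet $\{x_p=0\}$ of $\CC(P)$. The edge $\{x_a=x_b=x_p\}$ goes onto a diagonal of that square, so the smallest face containing its image in the relative interior is $2$-dimensional. Your map is therefore neither injective nor dimension-preserving. The lost faces reappear elsewhere: the facet $\{x_p=1\}$ splits into $\{x_a+x_p=1\}$ and $\{x_b+x_p=1\}$. Lower-boundary faces must be traded against upper-boundary faces globally over $B$. Neither a face-by-face injection of your type nor a local comparison around $p$ does this.

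\textbf{The missing ideas.}
\begin{itemize}
\item The upper boundary of $\OO_{C'}(P)$ is the graph of $U-L$, whose affinity subdivision is the common refinement $\mathcal S\vee\mathcal T$. Classifying the faces of the two fibre polytopes gives the exact identity $F_{\OO_{C'}(P)}-F_{\OO_C(P)}=F_B+F_{\mathcal S\vee\mathcal T}-F_{\mathcal S}-F_{\mathcal T}$ for the face polynomials.
\item The needed inequality $F_{\mathcal S}+F_{\mathcal T}\le F_{\mathcal S\vee\mathcal T}+F_B$ is false for arbitrary pairs of subdivisions (take $\mathcal S=\mathcal T$ nontrivial). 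So structural input is indispensable.
\item The paper gets it from transversality of $\mathcal S$ and $\mathcal T$ relative to every face of $B$. Here $L$ depends only on coordinates below $p$ and $U$ only on coordinates above $p$. The clipping decomposition $u=u^-+u^+$ then shows that, within each face of $B$, these two groups of directions can be varied independently.
\item The inequality follows by counting cells carrier by carrier. For cells that are full-dimensional in their carrier, this uses connectivity of the dual graph of a subdivision.
\end{itemize}
Geissinger's face partitions describe only the source side and give no control over this regluing. Your fallback local count would have to reproduce exactly these two ingredients: the face identity and the relative transversality.
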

We prove, more generally, monotonicity along the admissible chain--order polytopes. 
For an order ideal $I\subseteq P$, let $\OO_{I,P\setminus I}(P)$ be the polytope with chain coordinates indexed by $I$ and order coordinates indexed by $P\setminus I$, as recalled in Section~2. 
Thus $\OO_{\varnothing,P}(P)=\mathcal O(P)$ and $\OO_{P,\varnothing}(P)=\mathcal C(P)$. 
We show that
\[
 f_k(\OO_{I,P\setminus I}(P))\leq f_k(\OO_{I',P\setminus I'}(P))  \qquad  (I\subseteq I',\quad 0\leq k\leq n-1)
\]
for all order ideals $I,I'$ of $P$. 
This proves the admissible, unmarked case of \cite[Conjecture~5.3]{FF16}.

For the proof, we pass through the admissible chain--order polytopes by replacing one order coordinate by a chain coordinate at a time. 
The comparison of two consecutive polytopes is reduced, after projecting away the changing coordinate, to a local problem over a common base polytope. 
The resulting polyhedral subdivisions allow us to compare the face numbers at each step.

The paper is organized as follows. 
Section~2 recalls the partial transfer maps. 
Section~3 gives the fibre description for a one-element change. 
Section~4 proves the face-polynomial identity and the subdivision inequality, and Section~5 establishes relative transversality. 
The face-number inequality is proved in Section~6.

\medskip
\noindent\textbf{Acknowledgements.}
The author gratefully acknowledges support by the Deutsche Forschungsgemeinschaft (DFG, German Research Foundation) -- Project-ID 286237555 -- TRR 195. 
AI tools were used for language editing, bibliographic searches, and auxiliary checks of the final manuscript.

%%%%%%%%%%%%%%%%%%%%%%%%%%%%%%%%%%%%%%%%%%%%%%%%%%%%%%%%%%%%%%%%%
%%%%%%%%%%%%%%%%%%%%%%%%%%%%%%%%%%%%%%%%%%%%%%%%%%%%%%%%%%%%%%%%%

\section{Intermediate chain--order polytopes}\label{sec:intermediate}

Throughout the paper, $P$ is a finite poset. 
We allow the empty chain and take its sum to be zero. 
We set
\[
 \OO(P)=\{v\in[0,1]^P:v_p\leq v_q\text{ whenever }p\leq q\}
\]
and
\[
 \CC(P)=\left\{x\in\mathbb R_{\geq0}^P: \sum_{p\in K}x_p\leq1\text{ for every chain }K\subseteq P\right\}.
\]

Let $I\subseteq P$ be an order ideal and put $J=P\setminus I$.  
Define $\OO_{I,J}(P)$ by the inequalities
\begin{align*}
 x_i&\geq0 &&(i\in I),\\
 0\leq x_j&\leq1 &&(j\in J),\\
 x_j&\leq x_{j'} &&(j\leq j'\text{ in }J),\\
 \sum_{i\in K}x_i&\leq x_j    &&\left(\substack{j\in J,\ K\subseteq I\text{ a chain},\\
 i<j\text{ for every }i\in K}\right),\\
 \sum_{i\in K}x_i&\leq1 &&(K\subseteq I\text{ a chain}).
\end{align*}
In particular, $\OO_{\varnothing,P}(P)=\OO(P)$ and $\OO_{P,\varnothing}(P)=\CC(P)$.  
These are marked chain--order polytopes of \cite{FF16,Fan+20} obtained by adjoining a minimum marked $0$ and a maximum marked $1$.

\begin{lemma}\label{lem:transfer}
The map $T_I:\OO(P)\to\OO_{I,J}(P)$ given by
\[
 (T_Iv)_i=v_i-\max\bigl(\{v_p:p<i\}\cup\{0\}\bigr)\quad(i\in I),  \qquad  (T_Iv)_j=v_j\quad(j\in J)
\]
is a piecewise-linear bijection.  Its inverse is
\[
 v_i=\max\left\{\sum_{p\in K}x_p: K\subseteq I\text{ is a chain ending at }i\right\}, \qquad v_j=x_j\quad(j\in J).
\]
The same assertion holds with upper bound $1$ replaced by any $h\geq0$; the image is then $h\OO_{I,J}(P)$.
\end{lemma}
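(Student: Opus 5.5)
The plan is to verify directly that $T_I$ maps $\OO(P)$ into $\OO_{I,J}(P)$, that the proposed formula $S_I$ maps $\OO_{I,J}(P)$ into $\OO(P)$, and that the two compositions are the identity. Piecewise linearity of both maps is clear, since each coordinate is a maximum of finitely many linear forms or a difference of such. Throughout we use that $I$ is an order ideal. Hence every $p<i$ with $i\in I$ lies in $I$, so a chain in $I$ ending at $i$ only involves elements of $I$.

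First, fix $v\in\OO(P)$ and set $x=T_Iv$. Monotonicity of $v$ gives $x_i\ge0$; the conditions on $J$ are inherited from $v$. For a chain $i_1<\dots<i_r$ in $I$ we have $x_{i_k}\le v_{i_k}-v_{i_{k-1}}$ for $k\ge2$ and $x_{i_1}\le v_{i_1}$. Summing telescopically gives $\sum_k x_{i_k}\le v_{i_r}$. This is at most $1$, and it is at most $v_j=x_j$ whenever every $i_k<j$.

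Second, we compute $S_I(T_Iv)$ and aim to show $\max_K\sum_{p\in K}(T_Iv)_p=v_i$ for chains $K$ ending at $i$. The inequality $\le$ is the telescoping estimate just made. For $\ge$, build a chain greedily downward. Start at $i$; if $\max(\{v_p:p<i\}\cup\{0\})>0$, choose $p<i$ attaining it, and iterate. The process stops at an element whose strict lower set has maximum value $0$ or is empty. Along this chain the telescoping inequalities become equalities, so the sum equals $v_i$. Hence $S_I\circ T_I=\mathrm{id}$.

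Third, fix $x\in\OO_{I,J}(P)$ and set $v=S_Ix$. For $i\in I$, $v_i$ lies in $[0,1]$ by $x\ge0$ and the chain inequality. If $p<i$ in $I$, any chain ending at $p$ extends by $i$, so $v_p\le v_p+x_i\le v_i$. For $i<j$ with $j\in J$, every chain $K\subseteq I$ ending at $i$ satisfies $K<j$, so $v_i\le x_j=v_j$. The relation $j<i$ with $j\in J$, $i\in I$ cannot occur because $I$ is an ideal, and the conditions within $J$ are given. So $v\in\OO(P)$. Then $T_I(v)_i=v_i-\max(\{v_p:p<i\}\cup\{0\})$. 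The recursion $v_i=x_i+\max(\{v_p:p<i,\,p\in I\}\cup\{0\})$ holds because a maximal chain ending at $i$ is either $\{i\}$ or a chain ending at some $p<i$ followed by $i$. This gives $T_I(v)_i=x_i$, so $T_I\circ S_I=\mathrm{id}$.

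For general $h$, both maps are positively homogeneous of degree $1$ and $h\OO(P)$, $h\OO_{I,J}(P)$ are the dilates, so the claim follows by scaling ($h=0$ is trivial). The only delicate point is the greedy chain in the second step, specifically the use of the $\cup\{0\}$ convention at minimal elements. Everything else is bookkeeping.
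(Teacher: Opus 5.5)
Your proposal is correct and follows essentially the paper's route. Like the paper, you use the telescoping estimate for $T_I(\OO(P))\subseteq\OO_{I,J}(P)$, chain-extension arguments for the reverse inclusion, and the recursion $v_i=x_i+\max(\{v_p:p<i\}\cup\{0\})$ for $T_I\circ S_I=\mathrm{id}$. The one difference is your proof of $S_I\circ T_I=\mathrm{id}$ by an explicit greedy maximizing chain; the paper instead observes that this same recursion, solved along a linear extension, recovers $v$ uniquely from $T_Iv$, and both arguments are valid.
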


This is a special case of \cite[Theorem~2.1]{Fan+20}.
We recall the proof to fix the inverse map.

\begin{proof}
Since $I$ is an order ideal, all predecessors of an element of $I$ belong to $I$.  
For $x\in\OO_{I,J}(P)$ define $v$ successively along a linear extension by
\[
 v_i=x_i+\max\bigl(\{v_p:p<i\}\cup\{0\}\bigr)\quad(i\in I),  \qquad v_j=x_j\quad(j\in J).
\]
A chain ending at $i$ is either $\{i\}$ or a chain ending at a predecessor followed by $i$.  
Hence the recurrence gives the stated maximum over chain sums, and $(T_Iv)_i=x_i$.

Nonnegativity of $x_i$ implies $v_p\leq v_i$ for $p<i$ in $I$.
The chain bounds give $v_i\leq1$.  
If $i\in I$, $j\in J$, and $i<j$, every chain used to compute $v_i$ lies below $j$, so $v_i\leq x_j=v_j$.
The order inequalities inside $J$ hold by definition.  
Since $I$ is an ideal, no element of $J$ lies below an element of $I$.  
Thus $v\in\OO(P)$.

Conversely, let $v\in\OO(P)$ and set $x=T_Iv$.  
Then $x_i\geq0$ for $i\in I$, and for any chain $i_1<\cdots<i_r$ in $I$ we have
\[
 \sum_{s=1}^r x_{i_s}  \leq v_{i_1}+\sum_{s=2}^r(v_{i_s}-v_{i_{s-1}})  =v_{i_r}.
\]
If the chain lies below $j\in J$, this is at most $v_j=x_j$.
In all cases it is at most $1$.  
Hence $x\in\OO_{I,J}(P)$, and the recurrence recovers $v$.  
The case $h>0$ follows by scaling; for $h=0$ both sets consist of the zero vector.
\end{proof}

The inequalities defining $\OO_{I,J}(P)$ are finite in number, and the singleton-chain inequalities bound each chain coordinate.
Thus $\OO_{I,J}(P)$ is a polytope.  
It has dimension $|P|$: choose a strictly order-preserving $v$ with pairwise distinct coordinates in $(0,1)$.
The estimate above shows that $T_Iv$ satisfies all nontrivial defining inequalities strictly.  
For $P=\varnothing$, the polytope is a point.

\section{A one-element change}\label{sec:fiber}

Let $q$ be minimal in $J=P\setminus I$, put $P'=P\setminus\{q\}$, and set
\[
 B=\OO_{I,J\setminus\{q\}}(P').
\]
We give $P'$ the induced order. 
Deleting the $q$-coordinate of a point of $\OO_{I,J}(P)$ leaves all inequalities defining $B$ satisfied. 
Thus projection along this coordinate has image contained in $B$. 
Conversely, for $u\in B$, let $v=T_I^{-1}u\in\OO(P')$. 
We add a $q$-coordinate to the vector $(v_p)_{p\in P'}$, choosing
\[
 v_q\in\left[   \max\bigl(\{v_p:p<q\}\cup\{0\}\bigr),   \min\bigl(\{v_r:r>q\}\cup\{1\}\bigr)\right].
\]
Every predecessor of $q$ is below every successor in the induced order on $P'$, so the interval is nonempty. 
The resulting vector $(v_p)_{p\in P}$ belongs to $\OO(P)$. 
Moreover, no element of $I$ lies above $q$, so applying $T_I$ to this vector leaves all coordinates indexed by $P'$ equal to those of $u$. 
Thus $u$ is the projection of a point of $\OO_{I,J}(P)$, proving surjectivity. 
All predecessors of $q$ lie in $I$, since
$q$ is minimal in $J$, and all successors lie in $J$, since $I$ is an ideal.
In particular, $I\cup\{q\}$ is again an order ideal.

For $u\in B$ define
\begin{align*}
 L(u)&=\max\left( \left\{\sum_{p\in K}u_p: K\subseteq I\text{ is a chain and }p<q\text{ for every }p\in K\right\} \cup\{0\}\right),\\
 U(u)&=\min\bigl(\{u_r:r>q\}\cup\{1\}\bigr).
\end{align*}
The function $L$ is convex piecewise affine and depends only on coordinates below $q$. 
The function $U$ is concave piecewise affine and depends only on the order coordinates above $q$. 
If $K$ occurs in the definition of $L$ and $r>q$, then the defining inequalities of $B$ give $\sum_{p\in K}u_p\leq u_r$ and $\sum_{p\in K}u_p\leq1$.
It follows that
\[
 0\leq L(u)\leq U(u)\leq1 \qquad (u\in B).
\]

\begin{proposition}\label{prop:fibers}
With coordinates $(u,z)$ and $(u,w)$ over $B$, respectively,
\begin{align}
 \OO_{I,J}(P)&=\{(u,z):u\in B,\ L(u)\leq z\leq U(u)\},\label{eq:oldfiber}\\
 \OO_{I\cup\{q\},J\setminus\{q\}}(P)&= \{(u,w):u\in B,\ 0\leq w\leq U(u)-L(u)\}.\label{eq:newfiber}
\end{align}
\end{proposition}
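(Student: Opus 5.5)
Both identities come from reading off the defining inequalities of $\OO_{I,J}(P)$ and $\OO_{I\cup\{q\},J\setminus\{q\}}(P)$ directly, sorting them by whether they involve the $q$-coordinate. The key structural facts are the ones established just before the statement: every predecessor of $q$ lies in $I$, every successor of $q$ lies in $J\setminus\{q\}$, and $q$ is not below any element of $I$.

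\emph{First identity \eqref{eq:oldfiber}.} In $\OO_{I,J}(P)$ the coordinate $q$ is an order coordinate. I would list the inequalities that contain $x_q$:
\begin{itemize}
\item the bounds $0\leq x_q\leq 1$;
\item the inequalities $x_q\leq x_r$ for $r>q$ in $J$; there are no inequalities $x_j\leq x_q$ with $j<q$ in $J$, by minimality of $q$;
\item the chain inequalities $\sum_{i\in K}x_i\leq x_q$ for chains $K\subseteq I$ lying below $q$.
\end{itemize}
Together these say exactly $L(u)\leq z\leq U(u)$, where the empty chain gives the bound $0$. The remaining inequalities do not involve $x_q$. They are precisely the defining inequalities of $B=\OO_{I,J\setminus\{q\}}(P')$ on the coordinates $u$. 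This is because chains in $I$ and the relations among $I$ and $J\setminus\{q\}$ are the same in $P'$ as in $P$ (the order is induced). So \eqref{eq:oldfiber} is a direct rewriting, and no transfer map is needed.

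\emph{Second identity \eqref{eq:newfiber}.} Now $q$ is a chain coordinate $w$, and $I'=I\cup\{q\}$ is an ideal. A chain $K'\subseteq I'$ either avoids $q$, or has the form $K\cup\{q\}$ with $K\subseteq I$ a chain below $q$; the latter holds because nothing in $I$ lies above $q$, so $q$ is the top of such a chain. I would sort the inequalities of $\OO_{I',J\setminus\{q\}}(P)$ as follows.
\begin{itemize}
\item Those with chains avoiding $q$ are the inequalities of $B$.
\item Those with chains containing $q$ read $w+\sum_{K}u_p\leq u_r$ for $r\in J$ with $r>q$ (all elements of $K\cup\{q\}$ are below $r$ exactly when $q<r$, since $K<q$), and $w+\sum_K u_p\leq 1$.
\item Finally there is $w\geq0$.
\end{itemize}
Taking the maximum over $K$, including the empty chain, the second group becomes $w+L(u)\leq u_r$ for all $r>q$ and $w+L(u)\leq1$. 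That is, $w\leq U(u)-L(u)$.

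The only care needed is the bookkeeping in the second case. One must check that every chain in $I'$ through $q$ is of the stated form, and that the condition ``all elements below $j$'' reduces to $q<j$; both follow from $I$ being an ideal with $q$ minimal in $J$. One should also check that no inequality of $B$ is altered by the change of $P$ to $P'$. I expect no real obstacle; the remark $0\le L\le U\le 1$ on $B$ guarantees that the fibres are nonempty but is not needed for the set equalities.
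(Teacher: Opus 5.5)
Your proposal is correct and takes the same route as the paper's proof. You sort the defining inequalities by whether they involve the $q$-coordinate, observe that every chain through $q$ in $I\cup\{q\}$ has the form $K\cup\{q\}$ with $K\subseteq I$ below $q$, and take the maximum over such $K$ (including the empty chain) to read off $L(u)\leq z\leq U(u)$ and $0\leq w\leq U(u)-L(u)$.
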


\begin{proof}
For the order coordinate $z=x_q$, nonnegativity and the chain inequalities below $q$ give $z\geq L(u)$.  
The inequalities $z\leq1$ and $z\leq u_r$ for $r>q$ give $z\leq U(u)$. 
The inequalities not involving $z$ are precisely those of $B$, since $P'$ has the induced order. 
This proves \eqref{eq:oldfiber}.

No element of $I$ lies above $q$, so $q$ is maximal in $I\cup\{q\}$. 
Each chain containing $q$ therefore consists of a chain $K\subseteq I$ below $q$, followed by $q$.
For the chain coordinate $w=x_q$, the corresponding inequalities are
\[
 w+\sum_{p\in K}u_p\leq u_r\quad(r>q)
 \qquad\text{and}\qquad
 w+\sum_{p\in K}u_p\leq1,
\]
together with $w\geq0$.  
Maximizing over $K$ and minimizing the upper bounds gives \eqref{eq:newfiber}. 
Here the chain inequalities not involving $q$, together with the order inequalities on $J\setminus\{q\}$, are precisely the inequalities defining $B$.
\end{proof}

The change from \eqref{eq:oldfiber} to \eqref{eq:newfiber} is the
piecewise-affine bijection
\[
 \tau_q(u,z)=(u,z-L(u)),\qquad
 \tau_q^{-1}(u,w)=(u,w+L(u)).
\]
It equals $T_{I\cup\{q\}}\circ T_I^{-1}$, where both partial transfer maps are taken on $P$. 
Thus only the coordinate indexed by $q$ changes. 
The lower graph is sent to $B\times\{0\}$ and the upper graph to the graph of $U-L$. 
The map need not be globally affine.

\section{Subdivisions and face counting}\label{sec:counting}

For a finite polytopal complex $\mathcal K$, including all nonempty faces, set
\[
 F_{\mathcal K}(t)=  \sum_{A\in\mathcal K,\ A\neq\varnothing}t^{\dim A}.
\]
For a polytope, we use its full face complex, including the polytope itself.

Let $\mathcal S$ and $\mathcal T$ be the subdivisions of $B$ into maximal domains on which $L$ and $U$ are affine, together with their faces, and let $\mathcal S\vee\mathcal T$ be their common refinement. 
If
\[
 L=\max_i\ell_i,
\]
then a maximal cell of $\mathcal S$ is of the form
\[
 A_i=\{u\in B:L(u)=\ell_i(u)\}
\]
with nonempty interior relative to $\operatorname{aff}(B)$. 
For every face $E$ of $B$, the nonempty intersections $A_i\cap E$, together with their faces, form a subdivision of $E$. 
We denote this restricted subdivision by $\mathcal S|_E$. 
We use the analogous notation for $\mathcal T$ and $\mathcal S\vee\mathcal T$.

The \emph{carrier} of a cell is the smallest face of $B$ containing it. 
If its carrier is $E$, then its relative interior is contained in $\relint(E)$: a supporting hyperplane of $B$ containing a relative interior point of the cell contains the whole cell.

Here the cells $A\in\mathcal S$, $C\in\mathcal T$, and their carriers all lie in the base $B$. 
Their corresponding graph pieces in the old fibre polytope are
\[
 \{(u,L(u)):u\in A\},\qquad  \{(u,U(u)):u\in C\}.
\]
Projection $(u,z)\mapsto u$ is an affine isomorphism on each such piece. 
Lemma~\ref{lem:faceidentity} identifies these pieces with the lower and upper graph faces, respectively.

\begin{lemma}\label{lem:faceidentity}
One has
\begin{equation}\label{eq:faceidentity}
 F_{\OO_{I\cup\{q\},J\setminus\{q\}}(P)}-F_{\OO_{I,J}(P)}  =F_B+F_{\mathcal S\vee\mathcal T}-F_{\mathcal S}-F_{\mathcal T}.
\end{equation}
\end{lemma}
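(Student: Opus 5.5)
The plan is to classify the nonempty faces of each of the two fibre polytopes in Proposition~\ref{prop:fibers} by how they sit relative to the lower and upper graphs, and to write both face polynomials in the common form
\[
 F = F_{\mathrm{low}} + F_{\mathrm{up}} - F_{\mathrm{both}} + t\,\Phi .
\]
Here $F_{\mathrm{low}}$ and $F_{\mathrm{up}}$ count the faces contained in the lower and upper graph, and $F_{\mathrm{both}}$ counts the faces lying in both. The term $\Phi$ is defined below and is the same for both polytopes. Subtracting the two expressions then gives \eqref{eq:faceidentity} directly.

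For the old polytope $Q=\OO_{I,J}(P)=\{L\le z\le U\}$, I would first treat faces lying in the graph of $L$. Let $\ell$ be an affine function agreeing with $L$ on a cell. Since $L$ is convex, the inequality $z\ge\ell(u)$ is valid on $Q$. A face of $Q$ contained in the lower graph is therefore the lift $\{(u,L(u)):u\in A\}$ of a cell $A\in\mathcal S$. Conversely, every cell of $\mathcal S$ lifts to such a face, of dimension $\dim A$, by intersecting the face $z=\ell$ with the preimage of a face of $B$ cut out by a suitable supporting function. Hence $F_{\mathrm{low}}=F_{\mathcal S}$, and symmetrically $F_{\mathrm{up}}=F_{\mathcal T}$ since $U$ is concave.

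Faces in both graphs are lifts of cells of $\mathcal S\vee\mathcal T$ contained in $\{L=U\}$; call this subcomplex $\mathcal Z$. A remaining face $F$ contains a relative interior point with $L<z<U$. It is then vertical: $F=Q\cap(E\times\mathbb R)$, where $E$ is the carrier of $\pi(F)$, $L\not\equiv U$ on $E$, and $\dim F=\dim E+1$. This gives
\[
 \Phi=\sum_{E}t^{\dim E},
\]
the sum running over faces $E$ of $B$ on which $L\not\equiv U$. Altogether
\[
 F_Q = F_{\mathcal S}+F_{\mathcal T}-F_{\mathcal Z}+t\,\Phi .
\]

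For the new polytope $Q'=\{0\le w\le U-L\}$ the same analysis applies, with lower function $0$ and upper function $U-L$, which is concave. The lower graph $B\times\{0\}$ contributes $F_B$. The faces in both graphs are lifts of cells of $\{U-L=0\}$, which is again $\mathcal Z$, and the vertical faces give the same $\Phi$, since $U-L\not\equiv0$ exactly when $L\not\equiv U$. Once the upper graph is shown to contribute $F_{\mathcal S\vee\mathcal T}$, we get
\[
 F_{Q'}=F_B+F_{\mathcal S\vee\mathcal T}-F_{\mathcal Z}+t\,\Phi,
\]
and subtraction yields the lemma.

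The main obstacle is this remaining claim: the maximal domains of affinity of $U-L$ are exactly the maximal cells $A_i\cap C_j$ of $\mathcal S\vee\mathcal T$, rather than some coarser subdivision. A priori the affinity domains of a difference can merge cells. I would use the remark before Proposition~\ref{prop:fibers}: $L$ depends only on chain coordinates in $I$ below $q$, and $U$ only on order coordinates above $q$, which lie in $J$, so the two sets of variables are disjoint. Write $U=\min_j c_j$ and $L=\max_i\ell_i$. Then $U-L=\min_{i,j}(c_j-\ell_i)$, and distinct pairs $(i,j)$ give distinct affine functions, since $c_j-\ell_i=c_{j'}-\ell_{i'}$ forces $c_j=c_{j'}$ and $\ell_i=\ell_{i'}$ by disjointness of variables. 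One subtlety must be checked here. Two affine functions in disjoint variables might still coincide on $\operatorname{aff}(B)$, or on a lower-dimensional face where they merge. This is where I expect to need care, and possibly the finer transversality information deferred to Section~5.
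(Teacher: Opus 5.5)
Your classification of faces (lower graph $\leftrightarrow\mathcal S$, upper graph $\leftrightarrow\mathcal T$ resp.\ $\mathcal R$, overlap subtracted, vertical faces contributing $t$ times the faces of $B$ with nondegenerate fibre) is exactly the paper's. But the proposal stops at the step that carries the content: you do not prove that the affinity subdivision $\mathcal R$ of $H=U-L$ equals $\mathcal S\vee\mathcal T$, and you suggest this might need transversality. It does not.

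The missing idea is a convexity-defect argument, valid for arbitrary convex $L$ and concave $U$. For $x=\lambda u+(1-\lambda)v$,
\[
 H(x)-\lambda H(u)-(1-\lambda)H(v)=\bigl(U(x)-\lambda U(u)-(1-\lambda)U(v)\bigr)+\bigl(\lambda L(u)+(1-\lambda)L(v)-L(x)\bigr),
\]
and both brackets are nonnegative. If $H$ is affine on a convex set, the left side vanishes for all $u,v$ in it. Then both brackets vanish, so $L$ and $U$ are both affine there; the converse is trivial. Hence the maximal affinity domains of $H$ are precisely the maximal cells of $\mathcal S\vee\mathcal T$. This uses nothing about which coordinates $L$ and $U$ depend on; transversality enters only in Lemma~\ref{lem:subdivision}.

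Your disjoint-variables route can in fact be completed. $B$ is full-dimensional in $\mathbb R^{P'}$ by Section~2, and the branches of $L$ are linear, so $c_j-c_{j'}=\ell_i-\ell_{i'}$ forces both sides to be $0$. Moreover $\{H=c_j-\ell_i\}=A_i\cap C_j$. Your worry about merging on lower-dimensional faces is moot, because $\mathcal R$ is determined by its maximal cells. As written, however, the step is left open.

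There is a second, smaller unjustified claim: that the faces lying in both graphs are indexed by the same complex $\mathcal Z$ for both polytopes. For $\OO_{I,J}(P)$ these are cells common to $\mathcal S$ and $\mathcal T$ inside $D=\{L=U\}$. For the new polytope they are faces of $B$ inside $D$ that are cells of $\mathcal R$.

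To identify them you need that $D$ is a union of faces of $B$. Suppose $H(x)=0$ with $x\in\relint(E)$. Write $x$ as a proper convex combination of any $y\in E$ and $y'=x+\varepsilon(x-y)\in E$; concavity together with $H\geq0$ gives $H(y)=0$. On such $E$, $L|_E=U|_E$ is both convex and concave, hence affine, so $\mathcal S|_E$ and $\mathcal T|_E$ are trivial. Thus $\mathcal Z$ is just the set of faces of $B$ contained in $D$.

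With these two arguments added, your outline becomes the paper's proof.
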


\begin{proof}
Put
\[
 H=U-L
\]
and
\[
 D=\{u\in B:H(u)=0\}.
\]
By Section~\ref{sec:fiber}, $H\geq0$ on $B$, and $H$ is concave.

\smallskip
\noindent\emph{Step 1: degenerate fibres.}
We first show that $D$ is a union of faces of $B$. 
Suppose that $x\in\relint(E)$ for a face $E$ of $B$ and $H(x)=0$. 
For $y\in E$, choose $\varepsilon>0$ such that
\[
 y'=x+\varepsilon(x-y)\in E.
\]
Writing
\[
 x=\frac{\varepsilon}{1+\varepsilon}y+\frac{1}{1+\varepsilon}y',
\]
concavity and nonnegativity of $H$ give
\[
 0=H(x)\geq \frac{\varepsilon}{1+\varepsilon}H(y)+\frac{1}{1+\varepsilon}H(y')\geq0.
\]
Thus $H(y)=0$, and hence $E\subseteq D$.

If $E\subseteq D$, then $L|_E=U|_E$. Since $L|_E$ is convex and $U|_E$ is concave, their common restriction is affine. 
Thus the restrictions of $\mathcal S$ and $\mathcal T$ to $E$ are trivial. 
Consequently, the cells of either subdivision contained in $D$ are precisely the faces of $B$ contained in $D$. 
Let $F_D$ denote the face polynomial of this subcomplex.

\smallskip
\noindent\emph{Step 2: classification of faces.}
Consider first
\[
 \OO_{I,J}(P)=\{(u,z):u\in B,\ L(u)\leq z\leq U(u)\}.
\]
For a maximal affinity domain
$A_i=\{u\in B:L(u)=\ell_i(u)\}$ of $L$, the set
\[
 G_i^-=\{(u,z)\in\OO_{I,J}(P):z=\ell_i(u)\}
\]
is a face, since $z\geq L(u)\geq\ell_i(u)$ throughout the polytope.
Projection to $B$ restricts to an affine isomorphism $G_i^-\to A_i$.

Conversely, let $F$ be a nonempty face contained in the lower graph and choose $\xi\in\relint(F)$. 
Some $G_i^-$ contains $\xi$.
A face containing a relative interior point of $F$ contains all of $F$, so $F$ is a face of $G_i^-$. 
It follows that the lower graph faces are indexed, with dimensions preserved, by the cells of $\mathcal S$. 
The same argument applies to the upper graph and $\mathcal T$. 
The two collections overlap precisely in the lifted faces of $B$ contained in $D$, and therefore contribute
\[
 F_{\mathcal S}+F_{\mathcal T}-F_D.
\]
The remaining faces are the full inverse images of faces of $B$. 
To see this, expose a face by a linear form
\[
 c(u)+\alpha z.
\]
If $\alpha<0$, maximization over each fibre forces $z=L(u)$; if $\alpha>0$, it forces $z=U(u)$. 
Thus these faces have already been counted on the two graphs. If $\alpha=0$, the exposed face is
\[
 \{(u,z):u\in E,\ L(u)\leq z\leq U(u)\}
\]
for a face $E$ of $B$. 
For $E\subseteq D$ this has already been counted. 
Otherwise $H>0$ on $\relint(E)$, and its dimension is $\dim E+1$. 
Hence
\begin{equation}\label{eq:old-face-count}
 F_{\OO_{I,J}(P)}  =F_{\mathcal S}+F_{\mathcal T}-F_D+t(F_B-F_D).
\end{equation}

For
\[
 \OO_{I\cup\{q\},J\setminus\{q\}}(P)  =\{(u,w):u\in B,\ 0\leq w\leq H(u)\},
\]
the same classification applies. The lower graph is $B\times\{0\}$, while the upper graph is determined by the affinity subdivision $\mathcal R$ of $H$. Thus
\begin{equation}\label{eq:new-face-count}
 F_{\OO_{I\cup\{q\},J\setminus\{q\}}(P)}  =F_B+F_{\mathcal R}-F_D+t(F_B-F_D).
\end{equation}

\smallskip
\noindent\emph{Step 3: the upper subdivision.}
We claim that
\[
 \mathcal R=\mathcal S\vee\mathcal T.
\]
Let $u,v\in B$, $0<\lambda<1$, and $x=\lambda u+(1-\lambda)v$. 
By concavity of $U$ and convexity of $L$,
\[
 U(x)-\lambda U(u)-(1-\lambda)U(v)\geq0
\]
and
\[
 \lambda L(u)+(1-\lambda)L(v)-L(x)\geq0.
\]
Their sum is
\[
 H(x)-\lambda H(u)-(1-\lambda)H(v).
\]
If $H$ is affine on a convex set, this sum vanishes for all $u,v$ in that set, and hence both nonnegative terms vanish. 
Thus both $L$ and $U$ are affine there. 
The converse is immediate from $H=U-L$. 
Therefore the affinity subdivision of $H$ is the common refinement $\mathcal S\vee\mathcal T$.

Substituting
\[
 F_{\mathcal R}=F_{\mathcal S\vee\mathcal T}
\]
in \eqref{eq:new-face-count} and subtracting \eqref{eq:old-face-count} gives \eqref{eq:faceidentity}.
\end{proof}

\begin{definition}\label{def:relative-transverse}
Two subdivisions $\mathcal S$ and $\mathcal T$ of $B$ are \emph{transverse relative to the faces of $B$} if the following holds. 
For every face $E$ of $B$ and every $x\in\relint(E)$, let $A$ and $C$ be the unique cells of $\mathcal S|_E$ and $\mathcal T|_E$ whose relative interiors contain $x$. 
Then
\begin{equation}\label{eq:transverse}
 \dim(A\cap C)=\dim A+\dim C-\dim E.
\end{equation}
\end{definition}

\begin{lemma}\label{lem:subdivision}
If $\mathcal S$ and $\mathcal T$ are transverse relative to the faces of $B$, then
\begin{equation}\label{eq:subdivision-inequality}
 F_{\mathcal S}+F_{\mathcal T}\leq F_{\mathcal S\vee\mathcal T}+F_B
\end{equation}
coefficientwise.
\end{lemma}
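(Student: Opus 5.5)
The plan is to prove \eqref{eq:subdivision-inequality} one carrier at a time. Every cell of $\mathcal S$, $\mathcal T$, $\mathcal S\vee\mathcal T$ or $B$ has a unique carrier $E$, and its relative interior lies in $\relint(E)$. I therefore write each of $F_{\mathcal S}$, $F_{\mathcal T}$, $F_{\mathcal S\vee\mathcal T}$ and $F_B$ as a sum over faces $E$ of $B$ of the contributions of cells with carrier $E$. For $F_B$ this contribution is just $t^{\dim E}$. It then suffices to prove, for each fixed face $E$ with $e=\dim E$,
\[
 \sum_{A}t^{\dim A}+\sum_{C}t^{\dim C}\leq \sum_{A\cap C}t^{\dim(A\cap C)}+t^{e}.
\]
Here $A$ and $C$ range over cells of $\mathcal S|_E$ and $\mathcal T|_E$ with carrier $E$, and the right-hand sum ranges over cells of $\mathcal S\vee\mathcal T$ with carrier $E$.

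The cells of $\mathcal S\vee\mathcal T$ with carrier $E$ correspond bijectively to pairs $(A,C)$ as above with $\relint A\cap\relint C\neq\varnothing$. Indeed, a point of $\relint(A\cap C)$ determines $A$ and $C$ as the unique cells whose relative interiors contain it. By Definition~\ref{def:relative-transverse}, the cell attached to such a pair has dimension $\dim A+\dim C-e$.

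Next I build an injection from the left-hand terms into the right-hand terms that preserves dimension, except for a single top-dimensional term absorbed by $t^{e}$. Fix $A$ with $\dim A=a$. The set $\relint A$ is the finite union of the sets $\relint A\cap\relint C$. A piece with $\dim C<e$ has dimension $a+\dim C-e<a$ by transversality, so some maximal cell $C_A$ (with $\dim C_A=e$) meets $\relint A$. This gives the pair $(A,C_A)$, whose cell has dimension exactly $a$. Symmetrically, each $C$ gets a maximal cell $A_C$ and the pair $(A_C,C)$ of dimension $\dim C$.

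Pairs coming from non-maximal $A$, or from non-maximal $C$, are pairwise distinct. Such a pair has exactly one maximal entry, and that entry tells which map produced it. So all lower-dimensional terms inject. In dimension $e$, let $\Gamma$ be the bipartite graph whose vertices are the maximal cells of $\mathcal S|_E$ and $\mathcal T|_E$, with an edge whenever two of them have relative interiors that meet, i.e.\ whenever they give an $e$-dimensional cell of $\mathcal S\vee\mathcal T$. The coefficient of $t^e$ on the right is then $\#\text{edges}(\Gamma)+1$, and on the left it is $\#\text{vertices}(\Gamma)$. So it suffices that $\Gamma$ is connected, which gives $\#\text{edges}\geq\#\text{vertices}-1$.

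The main obstacle is this connectivity of $\Gamma$. I would prove it by removing from $\relint E$ the union of all cells of $\mathcal S|_E$ and $\mathcal T|_E$ of dimension at most $e-2$. What remains is still path-connected, since it is a convex open set minus finitely many sets of codimension at least two. Take a generic path in the remainder joining two maximal cells. It meets only points lying in relative interiors of top cells, or on a wall of one subdivision while interior to a top cell of the other. Such a point cannot lie on walls of both subdivisions simultaneously: transversality would force the two walls to meet in dimension $e-2$, so a generic path avoids these points. Hence, along the path, consecutive maximal cells of $\mathcal S$ and of $\mathcal T$ change only one at a time, and each configuration met gives an edge of $\Gamma$. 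This shows $\Gamma$ is connected, and summing over $E$ proves \eqref{eq:subdivision-inequality}.
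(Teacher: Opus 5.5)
Your proof is correct and follows essentially the same route as the paper: a carrier-by-carrier comparison, a dimension-preserving injection for $k<\dim E$ that rests on transversality, and connectivity of the bipartite graph of maximal cells for $k=\dim E$. The differences are only in bookkeeping. You assign to each non-maximal cell the single refinement cell $A\cap C_A$, where the paper uses the whole set $\mathcal R(A)$. You also prove connectivity directly with a generic path that avoids codimension-two sets, whereas the paper cites connectivity of the dual graph of $(\mathcal S\vee\mathcal T)|_E$ and then uses transversality to show that adjacent cells share a vertex of $G_E$.
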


\begin{proof}
Fix $k$. 
We prove the inequality separately for the $k$-cells with a fixed carrier $E$. 
Summing these inequalities over all faces $E$ of $B$ proves the inequality. 
Put $e=\dim E$.

\noindent\emph{Step 1: carriers of dimension $e\neq k$.}
If $e<k$, there are no $k$-cells with carrier $E$. 
Assume $e>k$.
For each $k$-cell $A\in\mathcal S$ with carrier $E$, set
\[
 \mathcal R(A)=  \{R\in\mathcal S\vee\mathcal T:R\subseteq A,\ \dim R=k\}.
\]
This set is nonempty because the common refinement induces a subdivision of $A$. 
Moreover, for every $R\in\mathcal R(A)$,
\[
 \relint(R)\subseteq\relint(A)\subseteq\relint(E),
\]
since $R\subseteq A$ and both have dimension $k$. 
Thus $R$ also has carrier $E$. Define $\mathcal R(C)$ analogously for the $k$-cells $C\in\mathcal T$ with carrier $E$.

Within either subdivision, these sets are pairwise disjoint, because the relative interiors of distinct cells are disjoint.
They are also disjoint across the two subdivisions. 
Indeed, if $R\in\mathcal R(A)\cap\mathcal R(C)$, then any $x\in\relint(R)$ lies in
$\relint(A)\cap\relint(C)\cap\relint(E)$. 
Transversality gives
\[
 \dim(A\cap C)=2k-e<k,
\]
contrary to $R\subseteq A\cap C$ and $\dim R=k$.
Hence we have pairwise disjoint, nonempty sets of refinement $k$-cells, one for each $k$-cell of either subdivision with carrier $E$. 
This proves the required inequality for $e>k$. 

\noindent\emph{Step 2: carriers of dimension $e=k$.}
Now the $k$-cells with carrier $E$ are precisely the full-dimensional cells of the restricted subdivisions $\mathcal S|_E$ and $\mathcal T|_E$. 
Form a bipartite graph $G_E$ whose vertex classes are these cells, with an edge  between $A$ and $C$ for every $e$-cell 
\[
 R=A\cap C
\]
of $(\mathcal S\vee\mathcal T)|_E$. 
Thus the edges of $G_E$ are in bijection with the $e$-cells of the common refinement.

We claim that $G_E$ is connected. For $e=0$ this is immediate. 
Suppose $e>0$ and put
\[
 \mathcal R=(\mathcal S\vee\mathcal T)|_E.
\]
The dual graph of a polytopal subdivision of a polytope is connected; see \cite[Lemma~2]{CJK25}. 
Let
\[
 R=A\cap C,\qquad R'=A'\cap C'
\]
be adjacent $e$-cells of $\mathcal R$, and let $F=R\cap R'$ be their common $(e-1)$-face. 
We show that either $A=A'$ or $C=C'$.

Suppose that $A\neq A'$ and $C\neq C'$. 
Then $A\cap A'$ is an $(e-1)$-cell of $\mathcal S|_E$ and $C\cap C'$ is an $(e-1)$-cell of $\mathcal T|_E$, both containing $F$. 
Since $F$ is a common face of two distinct $e$-cells of $\mathcal R$, its relative interior lies in $\relint(E)$. 
Moreover, $F$ has the same dimension as $A\cap A'$ and $C\cap C'$, so
\[
 \relint(F)\subseteq\relint(A\cap A')\cap\relint(C\cap C').
\]
Hence, for $x\in\relint(F)$, transversality gives
\[
 \dim\bigl((A\cap A')\cap(C\cap C')\bigr)  =(e-1)+(e-1)-e=e-2.
\]
This is impossible since the intersection contains the $(e-1)$-dimensional face $F$. 
Thus $A=A'$ or $C=C'$.

Consequently, adjacent cells in the dual graph of $\mathcal R$ correspond to edges of $G_E$ with a common endpoint. 
Since the dual graph of $\mathcal R$ is connected, all edges of $G_E$ lie in one connected component. 
Every vertex of $G_E$ is incident with an edge, since a full-dimensional cell of one subdivision cannot be covered by lower-dimensional cells of the other. 
Hence $G_E$ is connected.

Therefore
\[
 \#E(G_E)\geq \#V(G_E)-1.
\]
Thus the number of $k$-cells of $\mathcal S\vee\mathcal T$ with carrier $E$ is at least the sum of the corresponding numbers for $\mathcal S$ and $\mathcal T$, minus one. 
The missing $1$ is supplied by the face $E$ itself in $F_B$.
\end{proof}

\section{Clipping and relative transversality}\label{sec:transversality}

For $v\in\OO(P')$ and $0<h<1$, define
\[
 v^-_p=\min(v_p,h),  \qquad  v^+_p=\max(v_p-h,0),
\]
and put
\[
 u=T_Iv,  \qquad  u^\pm=T_I(v^\pm).
\]
This clipping decomposition is also used in the proof of \cite[Proposition~15]{FFP20}.

\begin{lemma}\label{lem:clipping}
One has
\begin{equation}\label{eq:clipping}
 u=u^-+u^+,  \qquad  u^-\in hB,  \qquad  u^+\in(1-h)B.
\end{equation}
If $u$ lies in a face $E$ of $B$, then
\begin{equation}\label{eq:same-face}
 u^-/h\in E,  \qquad  u^+/(1-h)\in E.
 \end{equation}
\end{lemma}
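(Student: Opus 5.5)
The plan is to deduce both parts of Lemma~\ref{lem:clipping} from the scaled version of Lemma~\ref{lem:transfer}, applied on $P'$ with the ideal $I$ and its complement $J\setminus\{q\}$. The decomposition \eqref{eq:clipping} should come from the fact that clipping commutes with maxima. The face statement \eqref{eq:same-face} should then follow by splitting each tight inequality of $B$ into its two pieces.

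\emph{Membership.} Since $v\in\OO(P')$ is order preserving with values in $[0,1]$, the vectors $v^-$ and $v^+$ are again order preserving. The map $t\mapsto\min(t,h)$ takes values in $[0,h]$, and $t\mapsto\max(t-h,0)$ takes values in $[0,1-h]$. Hence $v^-\in h\OO(P')$ and $v^+\in(1-h)\OO(P')$. The last sentence of Lemma~\ref{lem:transfer} (with $h$, respectively $1-h$, as upper bound) then gives $u^-\in hB$ and $u^+\in(1-h)B$.

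\emph{Additivity.} For $j\in J\setminus\{q\}$ we have $u_j=v_j=\min(v_j,h)+\max(v_j-h,0)=u^-_j+u^+_j$. For $i\in I$, put
\[
 m_i=\max\bigl(\{v_p:p<i\}\cup\{0\}\bigr).
\]
Both clipping functions are monotone and fix $0$. Therefore they commute with taking the maximum of a finite set containing $0$. Thus the corresponding quantities for $v^\pm$ are $m_i^-=\min(m_i,h)$ and $m_i^+=\max(m_i-h,0)$. Using $\min(a,h)+\max(a-h,0)=a$ twice, we get
\[
 u^-_i+u^+_i=(v^-_i-m^-_i)+(v^+_i-m^+_i)=v_i-m_i=u_i.
\]
This proves \eqref{eq:clipping}. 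This commutation step is the only place where some care is needed.

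\emph{Faces.} Every face $E$ of $B$ consists of the points of $B$ at which some subset of the defining inequalities of $B$ holds with equality. Each such inequality has the form $\lambda(x)\leq c$ with $\lambda$ linear and $c\in\{0,1\}$. On $sB$ the corresponding inequality is $\lambda(x)\leq sc$. Suppose $\lambda(u)=c$. Then $\lambda(u^-)\leq hc$ and $\lambda(u^+)\leq(1-h)c$, while $\lambda(u^-)+\lambda(u^+)=\lambda(u)=c$. Hence both inequalities are equalities. Consequently $u^-/h$ and $u^+/(1-h)$ lie in $B$ and satisfy with equality every inequality that cuts out $E$, so both lie in $E$. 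This gives \eqref{eq:same-face}.
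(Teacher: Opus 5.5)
Your proof is correct and follows the paper's argument: additivity comes from the clipping functions commuting with the maxima $m_i$, and membership in $hB$ and $(1-h)B$ comes from the scaled form of Lemma~\ref{lem:transfer}. The only difference is in the face step, where the paper notes that $u=h\,(u^-/h)+(1-h)\,(u^+/(1-h))$ is a proper convex combination of two points of $B$, so both points lie in $E$; you instead verify this directly by splitting each tight defining inequality $\lambda(x)\le c$ of $B$, and both arguments are valid.
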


\begin{proof}
For $i\in I$, put
\[
 m_i=\max\bigl(\{v_p:p<i\}\cup\{0\}\bigr).
\]
Maximum commutes with both clipping functions, so
\begin{align*}
 (T_Iv^-)_i  &=  \min(v_i,h)-\min(m_i,h),\\
 (T_Iv^+)_i  &=  (v_i-h)_+-(m_i-h)_+.
\end{align*}
These expressions sum to $v_i-m_i=(T_Iv)_i$. The untouched coordinates are additive as well. Hence
\[
 u=u^-+u^+.
\]
Since $v^-$ and $v^+$ have upper bounds $h$ and $1-h$, respectively, the scaled form of Lemma~\ref{lem:transfer} gives
\[
 u^-\in hB,  \qquad  u^+\in(1-h)B.
\]

Thus
\[
 u  = h\,\frac{u^-}{h} +(1-h)\,\frac{u^+}{1-h}
 \]
is a proper convex combination of two points of $B$. If $u\in E$, where $E$ is a face of $B$, both points lie in $E$. This proves
\eqref{eq:same-face}.
\end{proof}

\begin{proposition}[Relative transversality]\label{prop:transversality}
The subdivisions $\mathcal S$ and $\mathcal T$ determined by $L$ and $U$ are transverse relative to every face of $B$.
\end{proposition}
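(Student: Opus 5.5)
Fix a face $E$ of $B$ and a point $x\in\relint(E)$. Let $A\in\mathcal S|_E$ and $C\in\mathcal T|_E$ be the cells whose relative interiors contain $x$. Write $V_A,V_C,V_E$ for the linear spaces parallel to $\operatorname{aff}(A),\operatorname{aff}(C),\operatorname{aff}(E)$. Since $x\in\relint(A)\cap\relint(C)$, we have $\dim(A\cap C)=\dim(V_A\cap V_C)$. The claim \eqref{eq:transverse} is therefore equivalent to
\[
 V_A+V_C=V_E .
\]
I would establish this span statement, using the clipping decomposition of Lemma~\ref{lem:clipping}.

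\emph{Decoupling of $L$ and $U$.} I would show that, for $x$ and small $h$, the clipped points lie in the right cells. Take $h$ just above every coordinate $v_p$ with $p<q$, where $v=T_I^{-1}x$, and strictly below every $v_r$ with $r>q$ that exceeds them. The degenerate case $L(x)=U(x)$ puts $x$ in $D$, where both restricted subdivisions are trivial and the claim is immediate; so assume $L(x)<U(x)$. Clipping with $h\in(L(x),U(x))$ gives the following behaviour.

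\begin{itemize}
\item $v^-$ agrees with $v$ on all coordinates below $q$.
\item $v^+$ vanishes below $q$.
\item Symmetrically, on coordinates above $q$ the truncation $\min(\cdot,h)$ is constant equal to $h$, while $v^+$ carries the variation.
\end{itemize}

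Consequently $L(u^+)=0$ and $L(u^-)=L(u)$, while $U(u^-)=h$ is locally constant and $U(u^+)=U(u)-h$.

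\emph{Span argument.} Perturb $x$ within $E$ by moving $v$ in two families of directions.

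\begin{itemize}
\item Directions changing only the ``lower part'' $u^-/h\in E$ keep $U$ affine, since they stay in the constant region. Hence they lie in $V_C$.
\item Directions changing only the ``upper part'' $u^+/(1-h)\in E$ keep $L$ affine, since it stays $0$. Hence they lie in $V_A$.
\end{itemize}

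Lemma~\ref{lem:clipping} says both parts remain in $E$. Moreover, $x$ is a proper convex combination, so the map
\[
 (y,y')\mapsto hy+(1-h)y'
\]
from $E\times E$ onto $E$ is open near $(u^-/h,u^+/(1-h))$, after restricting to $\relint(E)$.

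I would then argue as follows. Varying $y$ near $u^-/h$ with $y'$ fixed, the point $hy+(1-h)u^+/(1-h)$ sweeps a neighbourhood of $x$ in $\operatorname{aff}E$ translated by $h V_E$. To make this usable, I need a sharper version: perturbations of $y$ preserving the clipping structure (coordinates above $q$ stay $\geq h$) sweep an open subset of $E$, along which $U$ stays affine. That gives an open subset of $E$ contained in a single $U$-region, so its direction space lies in $V_C$ — but only after the sum has been taken. The cleaner formulation is that every direction $d\in V_E$ splits as $d=d^-+d^+$, where $d^-$ is realizable by moving $y$ and $d^+$ by moving $y'$. Moving $y$ does not change $U$'s active piece, so $d^-\in V_C$; moving $y'$ does not change $L$'s active piece, so $d^+\in V_A$. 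This yields $V_E\subseteq V_A+V_C$, and the reverse inclusion is trivial.

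\emph{Main obstacle.} The main obstacle is justifying the last claims. I must check that perturbing $y$ within $E$ (near $u^-/h$) keeps the upper-coordinates clipped at $h$, so that the $U$-active piece is unchanged. I also need the dual statement for $y'$: nearby points of $hB$ have the relevant coordinates stay below $h$, so that $L$ of the $(1-h)$ part stays $0$.

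This needs a choice of $h$ in an open interval avoiding all values of $v$. The interval must make $v^-$ lie in the relative interior of a region where $U$ is constantly $h$-saturated, and $v^+$ in a region where everything below $q$ is $0$. Whether such region conditions survive within $E$ — which may force coordinates equal to $h$ boundaries — is where I expect the delicate case analysis. I would try first to handle it by choosing $h$ strictly between $L(x)$ and $U(x)$ and away from all coordinate values of $v$, using strict inequalities, which are open conditions.
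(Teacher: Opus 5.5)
Your reduction is correct. Because $x\in\relint(A)\cap\relint(C)$, the identity \eqref{eq:transverse} is equivalent to $V_A+V_C=V_E$, which is the dual of the statement $N_L\cap N_U=0$ proved in the paper. The gap is the step you flag yourself, and the routes you propose would not close it. What is needed is a splitting $d=d^-+d^+$ of each $d\in V_E$ with $d^\pm\in V_E$, $d^-_r=0$ for all $r>q$, and $d^+_p=0$ for all $p<q$.

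Openness of $(y,y')\mapsto hy+(1-h)y'$ cannot supply this. It gives no control over which coordinates each summand moves; the split $d=d+0$ is always available. Perturbing the summands separately also fails. Since $u^-_r=h$ for $r>q$, the point $u^-/h$ lies on the face $\{u_r=1\}$ of $B$ for each $r>q$. A perturbation of $u^-/h$ inside $E$ (for instance when $E=B$) can therefore lower $u_r$ and move the point onto a different branch of $U$. So your ``main obstacle'' check would come out false. Dually, $u^+/(1-h)$ lies on the faces $\{u_p=0\}$ for $p<q$. Nor do the clipping-preserving perturbations of $y$ sweep an open subset of $E$: if they did, $U$ would be affine near $x$ in $E$ and $\dim C=\dim E$, which fails in general.

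The missing idea is to clip the perturbed point instead of perturbing the clipped pieces. For $d\in V_E$ and small $\varepsilon>0$, the point $y=x+\varepsilon d$ lies in $E$ and still satisfies $L(y)<h<U(y)$. Hence $v=T_I^{-1}y$ satisfies $v_p<h<v_r$ whenever $p<q<r$. Applying Lemma~\ref{lem:clipping} to both $x$ and $y$ gives $d^-=(y^--x^-)/\varepsilon\in V_E$ with $d^-_p=d_p$ for $p<q$ and $d^-_r=0$ for $r>q$. It also gives $d^+=d-d^-=(y^+-x^+)/\varepsilon\in V_E$, which vanishes below $q$. This is the vector $a$ of the paper's Step~1.

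To conclude $d^-\in V_C$ and $d^+\in V_A$, you must still use the local description of the cells. Near $x$, $C$ is the part of $E$ where all branches of $U$ active at $x$ agree. So $V_C$ is cut out of $V_E$ by the forms $u_r-u_{r'}$ and $u_r$ with $r,r'>q$, and these vanish on $d^-$. Symmetrically, $V_A$ is cut out of $V_E$ by differences of active branches of $L$; these are supported below $q$ and vanish on $d^+$. With these two additions your argument becomes the paper's proof in dual form.
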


\begin{proof}
Fix a face $E$ of $B$ and $x\in\relint(E)$, and let $A$ and $C$ be the cells of $\mathcal S|_E$ and $\mathcal T|_E$ containing $x$ in their relative interiors. 
If $L(x)=U(x)$, then $L$ and $U$ are affine on $E$ by the proof of Lemma~\ref{lem:faceidentity}. 
Hence $A=C=E$, and there is nothing to prove.

Assume $L(x)<U(x)$. 
We first use the clipping decomposition to separate the directions coming from coordinates below and above $q$. 
We then apply this separation to the conormal spaces of $A$ and $C$.

\smallskip
\noindent\emph{Step 1: separation of the two sets of coordinates.}
Choose
\[
 L(x)<h<U(x).
\]
After restricting to a neighbourhood of $x$ in $E$, we may assume $L(y)<h<U(y)$. 
Let $v=T_I^{-1}y$. 
For $p<q$, 
\[
 v_p\leq L(y)<h,
\]
whereas for $r>q$,
\[
 v_r=y_r\geq U(y)>h.
\]
Thus, with
\[
 a=y^- -x^-,
\]
Lemma~\ref{lem:clipping} gives
\[
 a\in\dir(E),\qquad  a_p=y_p-x_p\quad(p<q),\qquad  a_r=0\quad(r>q).
\]

Put $V=\dir(E)$. 
We claim that the restrictions to $V$ of linear forms supported below $q$ and those supported above $q$ have zero intersection. 
Indeed, suppose that $\ell$ is supported below $q$, $m$ is supported above $q$, and
\[
 \ell|_V=m|_V.
\]
For $\delta\in V$, take $y=x+\varepsilon\delta$ with $\varepsilon>0$ sufficiently small. 
The vector $a$ above then satisfies
\[
 \ell(a)=\varepsilon\ell(\delta),  \qquad
 m(a)=0.
\]
Since $a\in V$, we have $\ell(a)=m(a)$, and hence $\ell(\delta)=0$. 
Thus the common restriction is zero.

\smallskip
\noindent\emph{Step 2: the cells meet transversely.}
Write $L=\max_i\ell_i$ as in Section~\ref{sec:counting}, and $U=\min_j g_j$, where the branches $g_j$ are the coordinate functions $u\mapsto u_r$ with $r>q$ together with the constant function $1$. 
Let 
\[
 S_L=\{i:\ell_i(x)=L(x)\},  \qquad  S_U=\{j:g_j(x)=U(x)\}
\]
be the sets of branches active at $x$. 
Since $\ell_i(x)<L(x)$ for $i\notin S_L$ and $g_j(x)>U(x)$ for $j\notin S_U$, the remaining branches stay inactive in a neighbourhood of $x$. 
Hence, near $x$,
\[
 A=E\cap\bigcap_{i,i'\in S_L}\{\ell_i=\ell_{i'}\},  \qquad  C=E\cap\bigcap_{j,j'\in S_U}\{g_j=g_{j'}\},
\]
and therefore, with $V=\dir(E)$ as above,
\[
 \dir(A)=V\cap\bigcap_{i,i'\in S_L}\ker(\ell_i-\ell_{i'}),  \qquad  \dir(C)=V\cap\bigcap_{j,j'\in S_U}\ker(g_j-g_{j'})_{\mathrm{lin}},
\]
where $(\,\cdot\,)_{\mathrm{lin}}$ denotes the linear part. 
Consequently the conormal spaces $N_L=\dir(A)^\perp$ and $N_U=\dir(C)^\perp$ in $V^*$ are spanned by the restrictions to $V$ of the forms $\ell_i-\ell_{i'}$ and $(g_j-g_{j'})_{\mathrm{lin}}$, respectively. 
The former are supported below $q$. 
The differences $g_j-g_{j'}$ are of the form $u_r-u_{r'}$ or $u_r-1$ with $r,r'>q$, so their linear parts $u_r-u_{r'}$ and $u_r$ are supported above $q$; the constant branch contributes no linear part. 
Step~1 therefore gives
\[
 N_L\cap N_U=0.
\]
By definition, $\dir(A)=N_L^\perp$ and $\dir(C)=N_U^\perp$. 
Since $x\in\relint(A)\cap\relint(C)$, the set $\relint(A)\cap\relint(C)$ is a relatively open neighbourhood of $x$ in $\operatorname{aff}(A)\cap\operatorname{aff}(C)$, so 
\[
 \dir(A\cap C)=\dir(A)\cap\dir(C)=(N_L+N_U)^\perp.
\]
Writing $e=\dim E$, we obtain
\begin{align*}
 \dim(A\cap C)  &=e-\dim(N_L+N_U)\\
 &=e-\dim N_L-\dim N_U\\
 &=\dim A+\dim C-e.
\end{align*}
This is \eqref{eq:transverse}.
\end{proof}

The dimension formula in Definition~\ref{def:relative-transverse} is the polyhedral transversality condition. 
A related formulation for normal fans of Minkowski sums appears in \cite[Sections~2--3]{FW10}. 
Here it is proved directly for the subdivisions determined by $L$ and $U$,  after restriction to every face of the base.

\section{Proof of the face-number inequality}\label{sec:dominance}

\begin{proposition}\label{prop:step}
If $I$ is an order ideal of $P$ and $q$ is minimal in $P\setminus I$,
then
\[
 F_{\OO_{I,P\setminus I}(P)}(t)  \leq  F_{\OO_{I\cup\{q\},P\setminus(I\cup\{q\})}(P)}(t)
\]
coefficientwise.
\end{proposition}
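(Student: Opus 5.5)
The plan is to combine the three results of Sections~\ref{sec:fiber}--\ref{sec:transversality}, which were set up for exactly this step. Put $J=P\setminus I$. Since $q$ is minimal in $J$, the constructions of Section~\ref{sec:fiber} apply. We obtain the base $B=\OO_{I,J\setminus\{q\}}(P')$, the convex function $L$ and the concave function $U$, and Proposition~\ref{prop:fibers} describes both polytopes as fibre polytopes over $B$. I would take $\mathcal S$ and $\mathcal T$ to be the affinity subdivisions of $B$ for $L$ and $U$, as in Section~\ref{sec:counting}.

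First, I apply Lemma~\ref{lem:faceidentity}. It expresses the difference of the two face polynomials as
\[
 F_{\OO_{I\cup\{q\},J\setminus\{q\}}(P)}-F_{\OO_{I,J}(P)}=F_B+F_{\mathcal S\vee\mathcal T}-F_{\mathcal S}-F_{\mathcal T}.
\]
So the claim is equivalent to the right-hand side having nonnegative coefficients. Second, Proposition~\ref{prop:transversality} shows that $\mathcal S$ and $\mathcal T$ are transverse relative to the faces of $B$. This is precisely the hypothesis of Lemma~\ref{lem:subdivision}, which then gives $F_{\mathcal S}+F_{\mathcal T}\leq F_{\mathcal S\vee\mathcal T}+F_B$ coefficientwise. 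Combining the two statements yields the proposition.

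The only points needing care are bookkeeping. The polytope $\OO_{I\cup\{q\},J\setminus\{q\}}(P)$ must be the one written in the statement as $\OO_{I\cup\{q\},P\setminus(I\cup\{q\})}(P)$; this holds because $I\cup\{q\}$ is an order ideal, as noted in Section~\ref{sec:fiber}. The degenerate cases also need a check: if $B$ is a point (when $P'=\varnothing$), the subdivisions are trivial, and Lemma~\ref{lem:subdivision} still applies with $e=0$. The substantive work lives in the transversality proposition and the subdivision lemma, which are already established, so I expect no real obstacle here. The one step I would double-check is that the identity \eqref{eq:faceidentity} counts the polytopes themselves, that is, the top-dimensional faces, consistently on both sides. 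Both polytopes have dimension $|P|$, so the top coefficient contributes $1$ on each side and cancels.
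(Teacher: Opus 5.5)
Your proposal is correct and matches the paper's proof: Lemma~\ref{lem:faceidentity} rewrites the difference of face polynomials as $F_B+F_{\mathcal S\vee\mathcal T}-F_{\mathcal S}-F_{\mathcal T}$, and Proposition~\ref{prop:transversality} supplies the hypothesis of Lemma~\ref{lem:subdivision}, which makes that difference nonnegative coefficientwise. Your bookkeeping checks (that $I\cup\{q\}$ is an ideal, the case where $B$ is a point, and the top-degree cancellation) are harmless but not needed.
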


\begin{proof}
By Proposition~\ref{prop:transversality}, the subdivisions $\mathcal S$ and $\mathcal T$ are transverse relative to the faces of $B$.
Lemma~\ref{lem:subdivision} and Lemma~\ref{lem:faceidentity} give
\[
 F_{\OO_{I\cup\{q\},P\setminus(I\cup\{q\})}(P)}-F_{\OO_{I,P\setminus I}(P)}
 =  F_B+F_{\mathcal S\vee\mathcal T}  -F_{\mathcal S}-F_{\mathcal T}  \geq0
\]
coefficientwise.
\end{proof}

Choose a linear extension $q_1,\ldots,q_n$ of $P$ and set
\[
 I_s=\{q_1,\ldots,q_s\}.
\]
Each $I_s$ is an order ideal, and $q_{s+1}$ is minimal in its complement. Proposition~\ref{prop:step} gives
\[
 F_{\OO(P)}  =  F_{\OO_{\varnothing,P}(P)}  \leq  F_{\OO_{I_1,P\setminus I_1}(P)}  
 \leq\cdots\leq  F_{\OO_{P,\varnothing}(P)}  = F_{\CC(P)}.
\]
This proves Theorem~\ref{thm:face-inequality}. 
The top-dimensional coefficients are both $1$.

More generally, if $I\subseteq I'$ are order ideals, list the elements of $I'\setminus I$ in the order induced by a linear extension of $P$.
Adding them successively and applying Proposition~\ref{prop:step} at each step gives
\[
 F_{\OO_{I,P\setminus I}(P)}\leq F_{\OO_{I',P\setminus I'}(P)}
\]
coefficientwise.
\printbibliography
\end{document}